\documentclass[preprint,12pt,authoryear]{elsarticle}

\usepackage{amsmath,amssymb,amsthm,mathtools}
\usepackage{hyperref}

\newtheorem{theorem}{Theorem}
\newtheorem{lemma}{Lemma}
\newtheorem{corollary}{Corollary}
\newtheorem{proposition}{Proposition}
\newtheorem{definition}{Definition}
\newtheorem{example}{Example}
\newtheorem{remark}{Remark}

\newcommand{\E}{\mathbb{E}}
\newcommand{\Prob}{\mathbb{P}}
\newcommand{\R}{\mathbb{R}}
\newcommand{\C}{\mathbb{C}}

\newcommand{\Law}{\mathcal{L}}
\newcommand{\dd}{\,\mathrm{d}}
\newcommand{\Repart}{\operatorname{Re}}
\newcommand{\scle}{\preceq_{\mathrm{sc}}}

\numberwithin{equation}{section}

\journal{Statistics \& Probability Letters}

\begin{document}

\begin{frontmatter}

\title{Stable size-biasing and the positive scale-mixture order of generalized Gaussian laws}

\author[addr1]{Domingos S. P. Salazar\corref{cor1}}
\address[addr1]{Unidade de Educa\c c\~ao a Dist\^ancia e Tecnologia,
Universidade Federal Rural de Pernambuco,
52171-900 Recife, Pernambuco, Brazil}
\cortext[cor1]{Corresponding author.}

\begin{abstract}
Let $X_r\sim N_r(0,1)$ be the centered unit-scale generalized Gaussian random variable with density proportional to $\exp(-|x|^r/2)$. We prove that, for $p,q>0$, there exists a strictly positive random variable $V$, independent of $X_q$, such that
\[
X_p\stackrel{d}{=}VX_q
\]
if and only if $p\le q$. Moreover, the law of $V$ is unique. For $p<q$, put $a=1/p$, $b=1/q$, and $\alpha=b/a=p/q$. If $S_\alpha$ is positive $\alpha$-stable, $\E e^{-uS_\alpha}=e^{-u^\alpha}$, set $W_0=S_\alpha^{-b}$, let $W$ be the $W_0$-size-biased version of $W_0$, and define
\[
V_{p,q}=2^{a-b}W.
\]
Then $X_p\stackrel{d}{=}V_{p,q}X_q$. For $p>q$, the necessary Mellin quotient for $t\mapsto \E e^{it\log V}$ is unbounded by Stirling's formula, and hence cannot be a characteristic function. The factor laws form a multiplicative cocycle, $V_{p,r}\stackrel{d}{=}V_{p,q}V_{q,r}$, for $p\le q\le r$, where the factors on the right-hand side are independent copies. Thus the Mellin quotient isolated by Dytso, Bustin, Poor and Shamai is realized constructively throughout the $p<q$ branch. In particular, $\Phi_{p,q}$ is positive definite exactly in the range $p\le q$, and the inverse Fourier--Mellin candidate density in the remaining $p<q$ branch is a genuine nonnegative probability density. The known Gaussian-base and bounded-parameter product cases are recovered as parts of a single positive scale-mixture classification.
\end{abstract}

\begin{keyword}
generalized Gaussian distribution \sep product decomposition \sep scale mixture \sep Mellin transform \sep positive stable distribution \sep size-biased distribution \sep Stieltjes moment problem
\MSC[2020] 60E05 \sep 60E10 \sep 60E07 \sep 33B15
\end{keyword}

\end{frontmatter}

\section{Introduction}

For $r>0$, let $X_r\sim N_r(0,1)$ be the centered generalized Gaussian random variable with density
\begin{equation}
 f_r(x)=\frac{r}{2^{1+1/r}\Gamma(1/r)}\exp\left(-\frac{|x|^r}{2}\right),\qquad x\in\R .
\label{eq:density}
\end{equation}
This is the normalization used by \citet{Dytso2018}. Throughout, $N_r(0,1)$ refers to this scale normalization; it is not a variance-one normalization except when $r=2$, since
\[
\E X_r^2=2^{2/r}\frac{\Gamma(3/r)}{\Gamma(1/r)}.
\]
The parameter $r$ controls tail decay and concentration: smaller $r$ gives heavier tails, while larger $r$ gives lighter tails and stronger concentration. The family includes the Laplace law at $r=1$ and the standard Gaussian law at $r=2$, under the present scaling; see also \citet{Subbotin1923} and \citet{Nadarajah2005} for related historical and statistical treatments of generalized normal laws.

Scale-mixture representations for exponential-power and generalized Gaussian laws have a substantial prior literature. In the Gaussian-base case, \citet{West1987} showed that an exponential-power family is contained in the class of scale mixtures of normals and obtained the corresponding mixing distributions. Multivariate Gaussian-mixture branches for exponential-power distributions were developed by \citet{GomezSanchezManzano2008}, with Bayesian applications. More recently, \citet{Korolev2020} surveyed and extended univariate and multivariate mixture representations for exponential-power laws and related distributions, emphasizing product representations with independent multipliers. These works cover important Gaussian-base and multivariate branches. They do not, however, decide the full pairwise question of when $X_p$ is a positive independent scale mixture of $X_q$ for arbitrary $p,q>0$, nor do they identify the unique inter-shape factor for every admissible pair.

\citet{Dytso2018} studied stochastic orders, Mellin transforms, product decompositions, characteristic functions, infinite divisibility and self-decomposability for this family. In their notation, Proposition~5 proves
\[
X_Q\stackrel{d}{=}U_{P,Q}X_{2Q/P},\qquad 0<Q\le P\le 2,
\]
with $U_{P,Q}>0$ independent of $X_{2Q/P}$. Setting $Q=p$ and $P=2p/q$, the conditions $0<Q\le P\le 2$ are equivalent to $0<p\le q\le 2$. Thus Proposition~5 supplies
\[
X_p\stackrel{d}{=}UX_q
\]
on the product subregion $0<p\le q\le 2$.

Their concluding discussion asks whether the more general product representation
\begin{equation}
X_p\stackrel{d}{=}VX_q,\qquad V>0,\quad V\perp X_q,
\label{eq:product-question}
\end{equation}
can hold. Taking absolute values and using Mellin transforms shows that any such factor must satisfy
\begin{equation}
\E V^{it}=\Phi_{p,q}(t):=\frac{2^{it/p}\Gamma((1+it)/p)\Gamma(1/q)}{2^{it/q}\Gamma((1+it)/q)\Gamma(1/p)} .
\label{eq:Phi}
\end{equation}
Thus the product problem is equivalent to a positive-definiteness problem for $\Phi_{p,q}$. Necessity follows from \eqref{eq:Phi}. Conversely, if $\Phi_{p,q}$ is the characteristic function of a real random variable $Y$, then, taking $V=e^Y$ independent of $X_q$, the logarithms of $|VX_q|$ and $|X_p|$ have the same characteristic function. Since the generalized Gaussian laws are symmetric and have no atom at zero, attaching the independent Rademacher sign of $X_q$ gives $VX_q\stackrel{d}{=}X_p$. Proposition~15 of \citet{Dytso2018} proves that $\Phi_{p,q}$ is not a characteristic function when $p>q$. When $p<q$, it proves integrability of $\Phi_{p,q}$ and gives the corresponding inverse Fourier--Mellin candidate density, conditional on that candidate being a genuine probability density. The unresolved step is therefore the positive-definiteness of $\Phi_{p,q}$ throughout the $p<q$ branch not already supplied by the product representation in their Proposition~5. The present note proves this step by constructing the corresponding strictly positive factor explicitly.

The contribution of this note is intentionally precise. We prove the full positive scale-mixture order for all $p,q>0$:
\[
\exists\,V>0,\ V\perp X_q,\ X_p\stackrel{d}{=}VX_q\quad\Longleftrightarrow\quad p\le q.
\]
For $p<q$, the Mellin quotient is realized by an explicit strictly positive random variable built from a positive stable law and ordinary size-biasing. For $p>q$, the same quotient is necessarily the characteristic function of $\log V$, but its modulus is unbounded, which is impossible for a characteristic function. Relative to the previous literature, the new content is not another Gaussian scale-mixture formula; it is the exact pairwise order, the explicit positive-stable size-biased factor, uniqueness of the factor law, and the extension beyond both the Gaussian-base branch and the earlier product subregion $0<p\le q\le 2$.

This perspective is useful because the generalized Gaussian parameter $r$ is used precisely to tune tail behavior and concentration. If $\nu_{p,q}$ denotes the law of the factor, then, for every bounded measurable $h$,
\[
\E h(X_p)=\int_0^\infty \E h(vX_q)\,\nu_{p,q}(\dd v),\qquad p\le q.
\]
The weighted form of the construction also gives an exact importance identity for moving between shape parameters. Thus the factor supplies a canonical transport kernel between generalized Gaussian shapes, not only a positivity certificate for an inverse Mellin formula.

\paragraph{Contents.} Section~\ref{sec:prelim} records the Mellin transform of $|X_r|$, the negative moments of a positive stable variable, and the size-biased stable factor. Section~\ref{sec:theorem} proves the classification theorem. Section~\ref{sec:consequences} gives the transport-kernel consequences and benchmark cases. Section~\ref{sec:conclusion} summarizes the result and possible extensions.

\section{Preliminaries}
\label{sec:prelim}

We start with the absolute Mellin transform of the generalized Gaussian law.

\begin{lemma}[Absolute Mellin transform]
Let $X_r\sim N_r(0,1)$, with density \eqref{eq:density}. Then, for $\Repart s>-1$,
\begin{equation}
\E |X_r|^s=2^{s/r}\frac{\Gamma((s+1)/r)}{\Gamma(1/r)}.
\label{eq:absolute-mellin}
\end{equation}
\end{lemma}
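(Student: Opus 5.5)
The plan is a direct computation from the density \eqref{eq:density}, followed by analytic continuation to complex exponents. First I will take $s$ real with $s>-1$. By symmetry of $f_r$,
\[
\E|X_r|^s=\frac{2r}{2^{1+1/r}\Gamma(1/r)}\int_0^\infty x^s e^{-x^r/2}\dd x .
\]
In the integral I will substitute $u=x^r/2$, so that $x=(2u)^{1/r}$ and $\dd x=\tfrac{1}{r}2^{1/r}u^{1/r-1}\dd u$. The integral then becomes
\[
\frac{2^{(s+1)/r}}{r}\int_0^\infty u^{(s+1)/r-1}e^{-u}\dd u=\frac{2^{(s+1)/r}}{r}\,\Gamma\!\left(\frac{s+1}{r}\right).
\]
This is finite exactly when $(s+1)/r>0$, that is, when $s>-1$. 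Multiplying by the prefactor $r/(2^{1/r}\Gamma(1/r))$ gives $2^{s/r}\Gamma((s+1)/r)/\Gamma(1/r)$, which is \eqref{eq:absolute-mellin} for real $s>-1$.

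For complex $s$ with $\Repart s>-1$, I will note that $|x^s|=x^{\Repart s}$. The integral therefore converges absolutely by the real case, and the same substitution applies verbatim, using the principal branch $x^s=e^{s\log x}$ for $x>0$.

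Alternatively, I could argue by analytic continuation. Both sides are holomorphic in the half-plane $\Repart s>-1$: the left side by dominated convergence or Morera's theorem, and the right side because $\Gamma$ is holomorphic for $\Repart>0$. They agree on the real interval $(-1,\infty)$, so the identity theorem gives equality on the whole half-plane.

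No real obstacle is expected. The only points needing care are the bookkeeping of the powers of $2$ (the factor $2^{1/r}$ produced by the substitution cancels the $2^{1/r}$ in the normalizing constant) and the justification for non-real $s$. I would handle the latter by the direct absolute-convergence remark rather than by analytic continuation, since it is shorter.
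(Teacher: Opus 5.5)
Your proposal is correct and follows the paper's proof essentially line for line. Both use the symmetry reduction, the substitution $u=x^r/2$, and recognition of the Gamma integral, and the powers of $2$ cancel as you describe. Your explicit remark that $|x^s|=x^{\Repart s}$ handles non-real exponents is a small clarification that the paper leaves implicit in its proviso $\Repart s>-1$.
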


\begin{proof}
By symmetry and the normalization in \eqref{eq:density},
\[
\E |X_r|^s=\frac{r}{2^{1/r}\Gamma(1/r)}\int_0^\infty x^s e^{-x^r/2}\dd x .
\]
With $y=x^r/2$, one has $x=(2y)^{1/r}$ and $\dd x=(2^{1/r}/r)y^{1/r-1}\dd y$. Hence
\[
\E |X_r|^s=\frac{2^{s/r}}{\Gamma(1/r)}\int_0^\infty y^{(s+1)/r-1}e^{-y}\dd y
=2^{s/r}\frac{\Gamma((s+1)/r)}{\Gamma(1/r)},
\]
provided $\Repart s>-1$.
\end{proof}

We next fix the stable-law normalization used throughout.

\begin{definition}[Positive $\alpha$-stable law]
Let $0<\alpha<1$. A positive $\alpha$-stable random variable $S_\alpha$ is the positive random variable whose Laplace transform is
\begin{equation}
\E e^{-uS_\alpha}=e^{-u^\alpha},\qquad u\ge0.
\label{eq:stable-laplace}
\end{equation}
Equivalently, $S_\alpha$ is the time-one marginal of the stable subordinator with Laplace exponent $u^\alpha$. This is the standard one-sided stable normalization used in classical treatments of stable laws \citep{Feller1971,Zolotarev1986,Nolan2020}.
\end{definition}

With this normalization, we recall the standard negative-moment formula.

\begin{lemma}[Negative moments of a positive stable law]
Let $0<\alpha<1$, and let $S_\alpha$ be as in Definition~1. Then, for $z\in\C$ with $\Repart z>0$,
\begin{equation}
\E S_\alpha^{-z}=\frac{\Gamma(z/\alpha)}{\alpha\Gamma(z)}.
\label{eq:stable-negative-moment}
\end{equation}
\end{lemma}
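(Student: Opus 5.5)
The plan is to use the elementary Gamma-integral representation of negative powers and then swap expectation and integration via Tonelli. For $\Repart z>0$ and $s>0$,
\[
s^{-z}=\frac{1}{\Gamma(z)}\int_0^\infty u^{z-1}e^{-us}\dd u .
\]
We will first treat real $z>0$ and extend to complex $z$ at the end.

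For real $z>0$ the integrand is nonnegative. Setting $s=S_\alpha$ and taking expectations, Tonelli's theorem allows the interchange, and the Laplace transform \eqref{eq:stable-laplace} gives
\[
\E S_\alpha^{-z}=\frac{1}{\Gamma(z)}\int_0^\infty u^{z-1}\E e^{-uS_\alpha}\dd u=\frac{1}{\Gamma(z)}\int_0^\infty u^{z-1}e^{-u^\alpha}\dd u .
\]
We then substitute $v=u^\alpha$, so that $u=v^{1/\alpha}$ and $\dd u=\alpha^{-1}v^{1/\alpha-1}\dd v$. The integral becomes
\[
\alpha^{-1}\int_0^\infty v^{z/\alpha-1}e^{-v}\dd v=\Gamma(z/\alpha)/\alpha .
\]
This gives \eqref{eq:stable-negative-moment} for real $z>0$, and shows in particular that $\E S_\alpha^{-x}<\infty$ for every $x>0$.

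For complex $z$ with $\Repart z=x>0$ we use $|S_\alpha^{-z}|=S_\alpha^{-x}$, which is integrable by the real case. Hence the same computation goes through by Fubini, with absolute convergence guaranteed since
\[
\int_0^\infty u^{x-1}\E e^{-uS_\alpha}\dd u<\infty .
\]
The Gamma substitution is valid for complex exponents with positive real part. Alternatively, both sides are analytic in $\{\Repart z>0\}$, the left side by dominated convergence on compact subsets, and they agree on $(0,\infty)$, so the identity theorem finishes the argument.

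The only delicate point is justifying the interchange for complex $z$, which reduces to the real case as above. Positivity of $S_\alpha$ (no atom at $0$, since $e^{-u^\alpha}\to0$ as $u\to\infty$) ensures that $S_\alpha^{-z}$ is well defined almost surely.
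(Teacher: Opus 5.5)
Your proof is correct and follows the paper's argument: the Gamma-integral representation of $s^{-z}$, Fubini justified by $\int_0^\infty u^{\Repart z-1}e^{-u^\alpha}\dd u<\infty$, and then the substitution $v=u^\alpha$. Treating real $z$ first via Tonelli is only a more explicit spelling-out of the same absolute-integrability check the paper performs.
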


\begin{proof}
For $x>0$ and $\Repart z>0$,
\[
x^{-z}=\frac{1}{\Gamma(z)}\int_0^\infty u^{z-1}e^{-ux}\dd u .
\]
Moreover,
\[
\int_0^\infty u^{\Repart z-1}\E e^{-uS_\alpha}\dd u
=\int_0^\infty u^{\Repart z-1}e^{-u^\alpha}\dd u<\infty .
\]
Hence Fubini's theorem gives
\[
\E S_\alpha^{-z}=\frac{1}{\Gamma(z)}\int_0^\infty u^{z-1}e^{-u^\alpha}\dd u .
\]
The substitution $v=u^\alpha$ gives
\[
\E S_\alpha^{-z}=\frac{1}{\alpha\Gamma(z)}\int_0^\infty v^{z/\alpha-1}e^{-v}\dd v
=\frac{\Gamma(z/\alpha)}{\alpha\Gamma(z)}.
\]
\end{proof}

We shall use ordinary size-biasing.

\begin{definition}[Size-biased law]
Let $Y\ge0$ satisfy $0<\E Y<\infty$. The $Y$-size-biased version $Y^*$ of $Y$ is the positive random variable whose law is defined by
\[
\Prob(Y^*\in A)=\frac{\E[Y\mathbf{1}_{\{Y\in A\}}]}{\E Y},
\]
for Borel sets $A\subseteq[0,\infty)$. Equivalently,
\[
\E h(Y^*)=\frac{\E[Yh(Y)]}{\E Y}
\]
for all bounded measurable $h$.
\end{definition}

The factor used in the proof is the following inverse-stable size-biased variable.

\begin{lemma}[Inverse-stable size-biased factor]
\label{lem:inverse-stable-size-bias}
Let $a>b>0$, set $\alpha=b/a\in(0,1)$, and let $S_\alpha$ be as in Lemma~2. Put
\[
W_0=S_\alpha^{-b},
\]
and let $W$ be the $W_0$-size-biased version of $W_0$. Then, for $\Repart s>-1$,
\begin{equation}
\E W^s=\frac{\Gamma(a(s+1))\Gamma(b)}{\Gamma(b(s+1))\Gamma(a)}.
\label{eq:W-mellin}
\end{equation}
Consequently, if
\[
V=2^{a-b}W,
\]
then
\begin{equation}
\E V^s=2^{(a-b)s}\frac{\Gamma(a(s+1))\Gamma(b)}{\Gamma(b(s+1))\Gamma(a)},\qquad \Repart s>-1.
\label{eq:V-mellin}
\end{equation}
\end{lemma}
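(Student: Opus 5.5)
The plan is to compute $\E W^s$ directly from the definition of size-biasing, which reduces everything to the negative moments of $S_\alpha$ in Lemma~2. By Definition~3, for $\Repart s>-1$ we expect
\[
\E W^s=\frac{\E[W_0\,W_0^{s}]}{\E W_0}=\frac{\E S_\alpha^{-b(s+1)}}{\E S_\alpha^{-b}}.
\]
This identity needs a short justification: size-biasing is stated for bounded $h$, but it extends to $h(y)=y^s$ (complex $s$) by monotone convergence applied to $|y|^{\Repart s}$, provided $\E W_0^{1+\Repart s}<\infty$. That finiteness is exactly what Lemma~2 gives, since $\Repart(b(s+1))>0$ when $\Repart s>-1$. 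We also need $0<\E W_0<\infty$ so that $W$ is well defined; this is the case $z=b>0$ of Lemma~2.

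The key computation then applies \eqref{eq:stable-negative-moment} with $z=b(s+1)$ and with $z=b$. Using $z/\alpha=za/b$, we have $b(s+1)/\alpha=a(s+1)$ and $b/\alpha=a$. Hence
\[
\E W^s=\frac{\Gamma(a(s+1))/(\alpha\Gamma(b(s+1)))}{\Gamma(a)/(\alpha\Gamma(b))}=\frac{\Gamma(a(s+1))\Gamma(b)}{\Gamma(b(s+1))\Gamma(a)}.
\]
The factors of $\alpha$ cancel, which gives \eqref{eq:W-mellin}.

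Finally, \eqref{eq:V-mellin} follows from $\E(cW)^s=c^s\E W^s$ with $c=2^{a-b}>0$, using the principal real power $c^s=2^{(a-b)s}$.

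None of the steps is a serious obstacle. The only points needing care are the extension of the size-bias identity from bounded test functions to the complex power $y^s$, handled by the integrability supplied by Lemma~2, and checking that $W$ is strictly positive. Positivity holds because $W_0>0$ almost surely, as $S_\alpha<\infty$ almost surely.
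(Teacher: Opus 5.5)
Your proposal is correct and follows essentially the paper's own proof. You write $\E W^s=\E W_0^{s+1}/\E W_0$ via size-biasing, justify integrability by the finiteness of $\E W_0^{\Repart s+1}$ from Lemma~2, evaluate Lemma~2 at $z=b(s+1)$ and $z=b$ so the factors of $\alpha$ cancel, and then scale by $2^{a-b}$.
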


\begin{proof}
For $\Repart z>0$, Lemma~2 gives
\[
\E W_0^z=\E S_\alpha^{-bz}=\frac{\Gamma(bz/\alpha)}{\alpha\Gamma(bz)}
=\frac{\Gamma(az)}{\alpha\Gamma(bz)},
\]
since $b/\alpha=a$. In particular,
\[
\E W_0=\frac{\Gamma(a)}{\alpha\Gamma(b)}<\infty .
\]
Let $\sigma=\Repart s>-1$. Since the law of $W$ is
\[
\Prob(W\in\dd x)=\frac{x\Prob(W_0\in\dd x)}{\E W_0},
\]
we have
\[
\E |W^s|=\frac{\E W_0^{\sigma+1}}{\E W_0}<\infty .
\]
Therefore, using $x^s=e^{s\log x}$ for $x>0$,
\[
\E W^s=\frac{1}{\E W_0}\int_0^\infty x^{s+1}\Prob(W_0\in\dd x)
=\frac{\E W_0^{s+1}}{\E W_0}
=\frac{\Gamma(a(s+1))\Gamma(b)}{\Gamma(b(s+1))\Gamma(a)}.
\]
Multiplication by $2^{a-b}$ gives \eqref{eq:V-mellin}.
\end{proof}

\begin{definition}[Positive scale-mixture relation]
For probability laws $\mu$ and $\nu$ on $\R$, write
\[
\mu\scle\nu
\]
if there exist $Z\sim\nu$ and a random variable $V>0$ almost surely, independent of $Z$, such that $VZ\sim\mu$. For random variables $Y$ and $Z$, write $Y\scle Z$ when $\Law(Y)\scle\Law(Z)$. Here and below, $\Law(X)$ denotes the law, or distribution, of a random variable $X$.
\end{definition}

The definition requires $V>0$. For the generalized Gaussian laws considered below, the same relation would be obtained if one allowed $V\ge0$: indeed, if $X_p\stackrel{d}{=}VX_q$, $V\ge0$, and $V\perp X_q$, then
\[
\Prob(VX_q=0)\ge \Prob(V=0),
\]
whereas $X_p$ has no atom at zero. Hence $\Prob(V=0)=0$. At the level of laws, $\scle$ is a scale-mixture preorder: reflexivity is given by $V=1$, and transitivity follows by taking fresh independent copies of the two scaling factors. Theorem~\ref{thm:main} shows that, on the centered unit-scale generalized Gaussian family, this preorder is exactly the usual order of shape parameters.

\begin{lemma}[Mellin quotient and uniqueness]
\label{lem:quotient-uniqueness}
Let $p,q>0$. If $V>0$, $V\perp X_q$, and $X_p\stackrel{d}{=}VX_q$, then, for every $t\in\R$,
\begin{equation}
\E V^{it}=\frac{2^{it/p}\Gamma((1+it)/p)\Gamma(1/q)}{2^{it/q}\Gamma((1+it)/q)\Gamma(1/p)}=:\Phi_{p,q}(t).
\label{eq:quotient}
\end{equation}
Consequently, whenever a strictly positive independent factor exists, its law is unique.
\end{lemma}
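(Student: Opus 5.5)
The plan is to pass to absolute values and factor the Mellin transform along the imaginary axis. Since $V>0$ and $V\perp X_q$, the equality $X_p\stackrel{d}{=}VX_q$ gives $|X_p|\stackrel{d}{=}V|X_q|$, with $V$ and $|X_q|$ still independent. Neither $|X_p|$ nor $|X_q|$ has an atom at zero, so almost surely $\log|X_p|$, $\log V$ and $\log|X_q|$ are all finite. We then have
\[
\log|X_p|\stackrel{d}{=}\log V+\log|X_q|
\]
with independent summands.

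Taking characteristic functions at $t\in\R$ and using independence gives
\[
\E|X_p|^{it}=\E V^{it}\,\E|X_q|^{it}.
\]
Each quantity here is well defined because $|x^{it}|=1$ for $x>0$. Lemma~1 with $s=it$, which satisfies $\Repart s=0>-1$, gives
\[
\E|X_r|^{it}=2^{it/r}\Gamma((1+it)/r)/\Gamma(1/r).
\]
This never vanishes, because $\Gamma$ has no zeros and $\Repart((1+it)/r)=1/r>0$ keeps us away from the poles. Dividing by $\E|X_q|^{it}$ then yields exactly $\Phi_{p,q}(t)$ as in \eqref{eq:quotient}.

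For uniqueness, note that \eqref{eq:quotient} determines the characteristic function $t\mapsto\E e^{it\log V}$ of $\log V$ for every real $t$. By the uniqueness theorem for characteristic functions, the law of $\log V$ is determined. Since $\exp$ is a bijection from $\R$ onto $(0,\infty)$, the law of $V=e^{\log V}$ is determined as well. Any two admissible strictly positive factors therefore have the same law.

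The only step that needs care is the non-vanishing of $\E|X_q|^{it}$, which justifies the division. This is immediate from $1/\Gamma$ being entire with zeros only at the poles of $\Gamma$, none of which lie on the line $\Repart z=1/q>0$. The rest is routine.
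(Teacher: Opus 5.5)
Your proof is correct and follows the paper's argument essentially step for step. You take absolute values, factor $\E|X_p|^{it}=\E V^{it}\,\E|X_q|^{it}$ by independence, divide by the nonvanishing gamma expression from Lemma~1, and read off uniqueness from the characteristic function of $\log V$; your added remarks on the missing atom at zero and on the poles of $\Gamma$ only make explicit what the paper leaves implicit.
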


\begin{proof}
Taking absolute values in $X_p\stackrel{d}{=}VX_q$ and using independence gives
\[
\E |X_p|^{it}=\E V^{it}\E |X_q|^{it}.
\]
By Lemma~1,
\[
\E |X_q|^{it}=2^{it/q}\frac{\Gamma((1+it)/q)}{\Gamma(1/q)}.
\]
This quantity is nonzero because the gamma function has no zeros. Division gives \eqref{eq:quotient}. Since $t\mapsto \E V^{it}$ is the characteristic function of $\log V$, and characteristic functions determine probability laws \citep{Lukacs1970}, the law of $\log V$, hence the law of $V$, is unique.
\end{proof}

\section{The factorization theorem}
\label{sec:theorem}

\begin{theorem}[Generalized Gaussian product factorization]
\label{thm:main}
Let $p,q>0$, and let $X_p\sim N_p(0,1)$ and $X_q\sim N_q(0,1)$. Then
\[
X_p\scle X_q\quad\Longleftrightarrow\quad p\le q.
\]
Equivalently, there exists a strictly positive random variable $V$, independent of $X_q$, such that
\[
X_p\stackrel{d}{=}VX_q
\]
if and only if $p\le q$. When such $V$ exists, its law is unique.

More precisely:
\begin{enumerate}
\item If $p=q$, one may take $V=1$.
\item If $p<q$, put $a=1/p$, $b=1/q$, $\alpha=b/a=p/q$, let $S_\alpha$ be positive $\alpha$-stable, put $W_0=S_\alpha^{-b}$, let $W$ be the $W_0$-size-biased version of $W_0$, chosen independent of $X_q$, and define
\[
V=2^{a-b}W.
\]
Then $V>0$ almost surely, $V\perp X_q$, and $VX_q\stackrel{d}{=}X_p$.
\item If $p>q$, no such strictly positive independent factor exists.
\end{enumerate}
\end{theorem}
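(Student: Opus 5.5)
The proof splits along the three items of the statement, with uniqueness supplied by Lemma~\ref{lem:quotient-uniqueness}. For $p=q$ there is nothing to prove beyond taking $V=1$. The real work lies in the two strict cases: a Mellin-transform identification for $p<q$, and a growth obstruction for $p>q$.

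\textbf{Case $p<q$.} Set $a=1/p>b=1/q$ and let $V=2^{a-b}W$ be the factor of Lemma~\ref{lem:inverse-stable-size-bias}, chosen independent of $X_q$. Since $S_\alpha>0$ almost surely, we have $W_0>0$, and size-biasing preserves positivity, so $V>0$. By \eqref{eq:V-mellin} with $s=it$,
\[
\E V^{it}=2^{(a-b)it}\frac{\Gamma(a(1+it))\Gamma(b)}{\Gamma(b(1+it))\Gamma(a)}=\Phi_{p,q}(t).
\]
Independence and Lemma~1 then give
\[
\E|VX_q|^{it}=\E V^{it}\,\E|X_q|^{it}=2^{it/p}\frac{\Gamma((1+it)/p)}{\Gamma(1/p)}=\E|X_p|^{it}.
\]
The two sides are the characteristic functions of $\log|VX_q|$ and $\log|X_p|$, both of which are well defined because neither law has an atom at zero. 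Hence $|VX_q|\stackrel{d}{=}|X_p|$. Now $X_q=\varepsilon|X_q|$, where the Rademacher sign $\varepsilon$ is independent of $|X_q|$ by symmetry, and therefore also independent of $V|X_q|$. It follows that $VX_q\stackrel{d}{=}\varepsilon|X_p|\stackrel{d}{=}X_p$.

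\textbf{Case $p>q$.} Suppose such a $V$ existed. Lemma~\ref{lem:quotient-uniqueness} would force $\E V^{it}=\Phi_{p,q}(t)$, and hence $|\Phi_{p,q}(t)|\le 1$ for all $t$. I will contradict this using Stirling's formula in the form
\[
|\Gamma(x+iy)|\sim\sqrt{2\pi}\,|y|^{x-1/2}e^{-\pi|y|/2}, \qquad |y|\to\infty,\ x \text{ fixed}.
\]
The factor $|2^{it/p-it/q}|$ equals $1$, so
\[
|\Phi_{p,q}(t)|\asymp C\,|t|^{(1/p-1/q)}\exp\!\Bigl(-\tfrac{\pi}{2}|t|\bigl(\tfrac1p-\tfrac1q\bigr)\Bigr)\cdot(\text{power corrections}).
\]
In detail, $\Gamma((1+it)/p)$ has real part $1/p$ and imaginary part $t/p$, which gives decay $e^{-\pi|t|/(2p)}$ times $|t|^{1/p-1/2}$. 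The denominator $\Gamma((1+it)/q)$ decays like $e^{-\pi|t|/(2q)}$ times $|t|^{1/q-1/2}$. The ratio is therefore of order $|t|^{1/p-1/q}e^{\pi|t|(1/q-1/p)/2}$. When $p>q$ we have $1/q-1/p>0$, so this grows exponentially and $|\Phi_{p,q}(t)|\to\infty$. That is impossible for a characteristic function, so no such $V$ exists.

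The main obstacle is handling Stirling's formula carefully for complex arguments: one must make sure the constants and the uniformity in $t$ are correct. Only a lower bound on $|\Phi_{p,q}(t)|$ for large $|t|$ is actually needed, so it is enough to apply the standard asymptotic $|\Gamma(x+iy)|/(\sqrt{2\pi}|y|^{x-1/2}e^{-\pi|y|/2})\to1$ separately to the numerator and the denominator. As a consistency check, the same computation for $p<q$ shows exponential decay of $\Phi_{p,q}$, which matches the integrability noted by Dytso et al.
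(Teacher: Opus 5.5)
Your proposal is correct and follows the paper's proof essentially step for step. For $p<q$ you match $\E|VX_q|^{it}=\E V^{it}\,\E|X_q|^{it}$ with $\E|X_p|^{it}$ via Lemma~\ref{lem:inverse-stable-size-bias} and Lemma~1 and reattach the independent Rademacher sign (the paper works with general $\Repart s>-1$ before setting $s=it$, which changes nothing). For $p>q$ you combine Lemma~\ref{lem:quotient-uniqueness} with the Stirling growth $|\Phi_{p,q}(t)|\asymp|t|^{1/p-1/q}e^{\pi|t|(1/q-1/p)/2}$ to contradict boundedness of a characteristic function. Your ``therefore also independent of $V|X_q|$'' needs $\varepsilon$ independent of the pair $(V,|X_q|)$, which follows from $V\perp X_q$ together with $\varepsilon\perp|X_q|$.
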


\begin{proof}
The case $p=q$ is immediate.

Assume first that $p<q$. Put
\[
a=\frac{1}{p},\qquad b=\frac{1}{q}.
\]
Then $a>b>0$. Let $V$ be the strictly positive random variable defined in Lemma~3, independent of $X_q$. For $s\in\C$ with $\sigma=\Repart s>-1$, the relevant complex moments are absolutely integrable, since
\[
\E[|V^s||X_q|^s]=\E V^\sigma\E |X_q|^\sigma<\infty .
\]
For $\Repart s>-1$, Lemma~3 and Lemma~1 give
\[
\E V^s=2^{(a-b)s}\frac{\Gamma(a(s+1))\Gamma(b)}{\Gamma(b(s+1))\Gamma(a)}
\]
and
\[
\E |X_q|^s=2^{bs}\frac{\Gamma(b(s+1))}{\Gamma(b)}.
\]
Therefore,
\begin{align}
\E |VX_q|^s
&=\E V^s\E |X_q|^s \notag\\
&=2^{(a-b)s}\frac{\Gamma(a(s+1))\Gamma(b)}{\Gamma(b(s+1))\Gamma(a)}\cdot 2^{bs}\frac{\Gamma(b(s+1))}{\Gamma(b)} \notag\\
&=2^{as}\frac{\Gamma(a(s+1))}{\Gamma(a)}
=\E |X_p|^s .
\label{eq:abs-moment-match}
\end{align}
Taking $s=it$ in \eqref{eq:abs-moment-match} yields
\[
\E e^{it\log |VX_q|}=\E e^{it\log |X_p|},\qquad t\in\R.
\]
Since $|X_p|>0$ and $|VX_q|>0$ almost surely, this is equality of the characteristic functions of $\log |VX_q|$ and $\log |X_p|$. Therefore
\[
|VX_q|\stackrel{d}{=}|X_p|.
\]
The density \eqref{eq:density} is symmetric and has no atom at zero, so $X_r=\varepsilon_r|X_r|$, where $\varepsilon_r$ is a Rademacher random variable independent of $|X_r|$. Since $V>0$ and $V\perp X_q$, the sign of $VX_q$ is $\varepsilon_q$, and $\varepsilon_q$ is independent of $(V,|X_q|)$, hence independent of $|VX_q|$. Thus both variables are obtained by attaching an independent Rademacher sign to the same absolute-value law, and
\[
VX_q\stackrel{d}{=}X_p.
\]

It remains to prove nonexistence for $p>q$. Suppose, for contradiction, that $V>0$, $V\perp X_q$, and $X_p\stackrel{d}{=}VX_q$. By Lemma~\ref{lem:quotient-uniqueness}, $\Phi_{p,q}$ in \eqref{eq:quotient} would have to be the characteristic function of $\log V$, and therefore would have to be bounded in modulus by one.

We include the short Stirling argument for completeness; it is the same obstruction underlying Proposition~15 of \citet{Dytso2018}. The fixed-real-part gamma asymptotic
\[
|\Gamma(x+iy)|\sim \sqrt{2\pi}\,|y|^{x-1/2}e^{-\pi |y|/2},\qquad x>0,\\ |y|\to\infty,
\]
gives, as $|t|\to\infty$,
\[
\left|\Gamma\left(\frac{1+it}{r}\right)\right|
\sim C_r |t|^{1/r-1/2}\exp\left(-\frac{\pi |t|}{2r}\right),
\]
with $C_r>0$. Therefore,
\[
|\Phi_{p,q}(t)|\sim C_{p,q}|t|^{1/p-1/q}\exp\left[\frac{\pi |t|}{2}\left(\frac{1}{q}-\frac{1}{p}\right)\right],
\qquad |t|\to\infty,
\]
for some $C_{p,q}>0$. If $p>q$, then $(1/q-1/p)>0$, so $|\Phi_{p,q}(t)|\to\infty$. This contradicts the boundedness of characteristic functions. Hence no strictly positive independent $V$ can exist when $p>q$. The uniqueness assertion follows from Lemma~\ref{lem:quotient-uniqueness}.
\end{proof}

\begin{corollary}[Distributional multiplicative cocycle]
\label{cor:cocycle}
Let $0<p\le q\le r$, and let $V_{u,v}$ denote a positive random variable with the unique factor law such that $X_u\stackrel{d}{=}V_{u,v}X_v$. If $V_{p,q}$ and $V_{q,r}$ are chosen independent of each other and of $X_r$, then
\[
V_{p,r}\stackrel{d}{=}V_{p,q}V_{q,r}.
\]
\end{corollary}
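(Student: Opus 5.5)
The plan is to combine the uniqueness part of Lemma~\ref{lem:quotient-uniqueness} with the transitivity of the scale-mixture preorder. Take $V_{p,q}$, $V_{q,r}$ and $X_r$ mutually independent. By Theorem~\ref{thm:main}, since $q\le r$ we have $X_q\stackrel{d}{=}V_{q,r}X_r$. Because $V_{p,q}$ is independent of the pair $(V_{q,r},X_r)$, we may multiply both sides by $V_{p,q}$ and keep equality in law:
\[
V_{p,q}X_q\stackrel{d}{=}V_{p,q}(V_{q,r}X_r)=(V_{p,q}V_{q,r})X_r .
\]
Here on the left $V_{p,q}$ is independent of $X_q$. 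The joint law of $(V_{p,q},X_q)$ is a product measure, so it is determined by its marginals, and the left side therefore has law $\Law(X_p)$.

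It follows that $X_p\stackrel{d}{=}(V_{p,q}V_{q,r})X_r$. The product $V_{p,q}V_{q,r}$ is strictly positive almost surely, and it is independent of $X_r$ by the mutual independence assumed above. Hence it is an admissible factor for the pair $(p,r)$. By the uniqueness assertion of Lemma~\ref{lem:quotient-uniqueness}, equivalently the last sentence of Theorem~\ref{thm:main}, its law must coincide with that of $V_{p,r}$. This gives $V_{p,r}\stackrel{d}{=}V_{p,q}V_{q,r}$.

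As an independent check, one can compare Mellin transforms directly. For $\Repart s>-1$, Lemma~\ref{lem:inverse-stable-size-bias} gives $\E V_{u,v}^s=\E|X_u|^s/\E|X_v|^s$, and the product of these ratios telescopes. Taking $s=it$ then identifies the characteristic functions of the logarithms. The degenerate cases $p=q$ or $q=r$, where one factor equals $1$, are covered by the same argument.

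I see no real obstacle. The only point needing care is the independence bookkeeping in the first step: one must check that $V_{p,q}$ is independent of $V_{q,r}X_r$, so that substituting an equal-in-law variable inside the product is legitimate. This holds because $V_{p,q}$ is independent of $(V_{q,r},X_r)$, and hence of any measurable function of that pair.
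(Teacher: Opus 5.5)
Your proposal is correct and follows essentially the paper's argument: you build $V_{p,q}V_{q,r}X_r$ from mutually independent pieces, treat $V_{q,r}X_r$ as a copy of $X_q$ independent of $V_{p,q}$ (the paper names this copy $X_q'$), and conclude by the uniqueness assertion of Lemma~\ref{lem:quotient-uniqueness}. Your Mellin telescoping check is a valid, slightly more computational substitute for that uniqueness step, but it is not needed.
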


\begin{proof}
Take $X_r$, $V_{q,r}$, and $V_{p,q}$ mutually independent, with the prescribed factor laws. Put
\[
X_q'=V_{q,r}X_r.
\]
By Theorem~\ref{thm:main}, $X_q'\stackrel{d}{=}X_q$, and $X_q'$ is independent of $V_{p,q}$. Applying the factorization from $q$ to $p$ to this fresh copy $X_q'$, we get
\[
V_{p,q}V_{q,r}X_r=V_{p,q}X_q'\stackrel{d}{=}X_p.
\]
Thus $V_{p,q}V_{q,r}$ is a strictly positive independent factor from $X_r$ to $X_p$. The law of such a factor is unique by Lemma~\ref{lem:quotient-uniqueness}.
\end{proof}

\begin{corollary}[Positive definiteness of the Mellin quotient]
\label{cor:positive-definiteness}
For $p,q>0$, the function
\[
\Phi_{p,q}(t)=\frac{2^{it/p}\Gamma((1+it)/p)\Gamma(1/q)}{2^{it/q}\Gamma((1+it)/q)\Gamma(1/p)}
\]
is a characteristic function if and only if $p\le q$. If $p<q$, then $\Phi_{p,q}$ is the characteristic function of
\[
\log V_{p,q}=\left(\frac{1}{p}-\frac{1}{q}\right)\log 2+\log W,
\qquad
W=\left(S_{p/q}^{-1/q}\right)^*,
\]
where the star denotes ordinary size-biasing of the positive random variable $S_{p/q}^{-1/q}$. If $p=q$, then $\Phi_{p,p}\equiv1$.
\end{corollary}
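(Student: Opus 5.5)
The plan is to deduce the corollary directly from Theorem~\ref{thm:main}, Lemma~\ref{lem:quotient-uniqueness} and Lemma~\ref{lem:inverse-stable-size-bias}, by splitting into the cases $p=q$, $p<q$ and $p>q$.

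\textbf{Case $p=q$.} Here the quotient is identically one by inspection, so $\Phi_{p,p}\equiv 1$. This is the characteristic function of the point mass at $0$, that is, of $\log V$ with $V=1$.

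\textbf{Case $p<q$.} Take $V_{p,q}=2^{a-b}W$ from Lemma~\ref{lem:inverse-stable-size-bias}, with $a=1/p$, $b=1/q$ and $W=(S_{p/q}^{-1/q})^*$. Evaluate \eqref{eq:V-mellin} at $s=it$, which is admissible since $\Repart s=0>-1$. This gives
\[
\E e^{it\log V_{p,q}}=\E V_{p,q}^{it}=2^{(a-b)it}\frac{\Gamma(a(1+it))\Gamma(b)}{\Gamma(b(1+it))\Gamma(a)}.
\]
Substituting $a=1/p$ and $b=1/q$ shows that this coincides term by term with $\Phi_{p,q}(t)$. Hence $\Phi_{p,q}$ is the characteristic function of
\[
\log V_{p,q}=(1/p-1/q)\log 2+\log W.
\]
Equivalently, one can invoke Theorem~\ref{thm:main}(2) together with Lemma~\ref{lem:quotient-uniqueness}. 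Both routes give the same identification.

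\textbf{Case $p>q$.} Suppose, for contradiction, that $\Phi_{p,q}$ is the characteristic function of some real $Y$. Put $V=e^Y$, chosen independent of $X_q$. Then, by the definition of $\Phi_{p,q}$ and Lemma~1,
\[
\E|VX_q|^{it}=\Phi_{p,q}(t)\,\E|X_q|^{it}=\E|X_p|^{it}.
\]
So $\log|VX_q|$ and $\log|X_p|$ have the same characteristic function, hence $|VX_q|\stackrel{d}{=}|X_p|$. Attaching the independent Rademacher sign exactly as in the proof of Theorem~\ref{thm:main} gives $VX_q\stackrel{d}{=}X_p$. This contradicts Theorem~\ref{thm:main}(3).

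Alternatively, one can simply quote the Stirling computation from that proof: $|\Phi_{p,q}(t)|\to\infty$ as $|t|\to\infty$, whereas every characteristic function is bounded by one. This direct route is cleaner, so I would use it.

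No step is a real obstacle. The only points needing care are checking the admissibility $\Repart s>-1$ at $s=it$ and the exact matching of the powers of $2$, namely $2^{(a-b)it}=2^{it/p}/2^{it/q}$.
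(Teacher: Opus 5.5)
Your proposal is correct and essentially matches the paper's proof: the $p>q$ case uses the same Stirling unboundedness argument, and $p=q$ is the same trivial observation. For $p<q$ you evaluate \eqref{eq:V-mellin} directly at $s=it$, which needs only Lemma~\ref{lem:inverse-stable-size-bias}; the paper instead cites Theorem~\ref{thm:main} together with Lemma~\ref{lem:quotient-uniqueness}, but the identification of $\Phi_{p,q}$ with the characteristic function of $\log V_{p,q}$ is the same.
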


\begin{proof}
If $p\le q$, Theorem~\ref{thm:main} gives a strictly positive independent factor $V_{p,q}$, and Lemma~\ref{lem:quotient-uniqueness} identifies $\Phi_{p,q}$ with the characteristic function of $\log V_{p,q}$. If $p>q$, the Stirling estimate in the proof of Theorem~\ref{thm:main} shows that $|\Phi_{p,q}(t)|\to\infty$, which is impossible for a characteristic function.
\end{proof}

\begin{remark}[Relation to the inverse Fourier--Mellin candidate]
For $p<q$, the same Stirling estimate gives $\Phi_{p,q}\in L^1(\R)$. With the convention
\[
g_{p,q}(y)=\frac{1}{2\pi}\int_\R e^{-ity}\Phi_{p,q}(t)\dd t,
\qquad y\in\R,
\]
Fourier inversion gives a continuous density $g_{p,q}$ for $\log V_{p,q}$. Therefore $V_{p,q}$ has density
\[
f_{p,q}(v)=\frac{1}{v}g_{p,q}(\log v),\qquad v>0.
\]
This identifies the inverse Fourier--Mellin candidate of \citet{Dytso2018} with the density of the explicit positive factor $V_{p,q}$, and proves its nonnegativity.
\end{remark}

\section{Consequences and benchmark cases}
\label{sec:consequences}

\subsection{Moment determinacy of the inter-shape factor}
\label{subsec:factor-moment-determinacy}

The explicit stable-size-biased representation also settles the Stieltjes moment problem for the factor itself. This sharpens the Gaussian-base determinacy result of \citet[Proposition~8]{Dytso2018} from a one-sided sufficient condition to an exact two-parameter phase diagram.

\begin{definition}[Stieltjes moment determinacy]
Let $Y\ge0$ have finite moments $m_k=\E Y^k$, $k=0,1,2,\ldots$. The law of $Y$ is Stieltjes moment-determinate if it is the only probability law on $[0,\infty)$ with moment sequence $(m_k)_{k\ge0}$. Otherwise it is Stieltjes moment-indeterminate.
\end{definition}

\begin{theorem}[Sharp Stieltjes moment determinacy of the factor]
\label{thm:factor-moment-determinacy}
Let $0<p\le q$, and let $V_{p,q}$ be the unique positive factor in Theorem~\ref{thm:main}. Then $V_{p,q}$ is Stieltjes moment-determinate if and only if
\[
\frac1p-\frac1q\le2 .
\]
Equivalently, it is Stieltjes moment-indeterminate if and only if $1/p-1/q>2$. In particular, $V_{p,p}=1$ and the boundary case $1/p-1/q=2$ are determinate.
\end{theorem}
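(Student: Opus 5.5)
The plan is to work directly with the explicit moments of $V_{p,q}$ from Lemma~\ref{lem:inverse-stable-size-bias}. Put $a=1/p$, $b=1/q$ and $\delta=a-b\ge0$. The case $\delta=0$ is trivial, since $V=1$. For $\delta>0$, setting $s=k$ in \eqref{eq:V-mellin} gives
\[
m_k=\E V_{p,q}^k=2^{\delta k}\frac{\Gamma(a(k+1))\,\Gamma(b)}{\Gamma(b(k+1))\,\Gamma(a)} .
\]
Stirling's formula then yields $\log m_k=\delta\,k\log k+O(k)$ as $k\to\infty$. The dichotomy at $\delta=2$ should therefore be exactly the dichotomy between Carleman's condition holding and Krein's condition holding.

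\emph{Determinacy for $\delta\le2$.} I will use Carleman's criterion for the Stieltjes problem, which requires $\sum_k m_k^{-1/(2k)}=\infty$. The estimate above gives $m_k^{-1/(2k)}=k^{-\delta/2}e^{O(1)}$. The series diverges precisely when $\delta/2\le1$. The boundary case $\delta=2$ gives a harmonic-type series, and the $O(1)$ term in the exponent only contributes bounded multiplicative constants, so divergence persists there. This settles the determinate half, including the boundary case $1/p-1/q=2$.

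\emph{Indeterminacy for $\delta>2$.} I will apply the Stieltjes version of Krein's criterion. If $V$ has a density $f$ on $(0,\infty)$ with
\[
\int_0^\infty \frac{-\log f(x^2)}{1+x^2}\dd x<\infty,
\]
then $V$ is indeterminate. This requires two-sided control of $f$.

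For the upper tail I will use the representation $W=(S_\alpha^{-b})^*$. The density of $W$ at $w$ equals $w/\E W_0$ times the density of $W_0=S_\alpha^{-b}$ at $w$. The latter is governed by the behaviour of the stable density $g_\alpha(s)$ as $s\downarrow0$. The classical small-ball asymptotic (Zolotarev; Feller) is
\[
g_\alpha(s)\sim C s^{-\frac{2-\alpha}{2(1-\alpha)}}\exp\!\bigl(-(1-\alpha)\alpha^{\alpha/(1-\alpha)}s^{-\alpha/(1-\alpha)}\bigr).
\]
Substituting $s=w^{-1/b}$ and using the identity $\alpha/(b(1-\alpha))=1/(a-b)$, I expect to obtain $-\log f_V(v)\sim c\,v^{1/\delta}$ as $v\to\infty$, with all polynomial prefactors negligible. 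In the Krein integral the tail integrand then behaves like $x^{2/\delta}/(1+x^2)$, which is integrable at infinity exactly when $\delta>2$.

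Near $v=0$ (that is, $S_\alpha\to\infty$), $g_\alpha(s)\asymp s^{-1-\alpha}$. Hence $f_V$ behaves like a power of $v$, so $\log f_V$ is integrable near $0$. On compact subsets of $(0,\infty)$ the density is continuous and strictly positive, because the positive stable density is. Together these make the Krein integral finite, so $V_{p,q}$ is indeterminate.

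The main obstacle is making the tail step rigorous. One must cite or derive the precise $s\downarrow0$ asymptotics of $g_\alpha$, or at least a lower bound of the form $g_\alpha(s)\ge \exp(-Cs^{-\alpha/(1-\alpha)})$ for small $s$, and also confirm global positivity of the density. A lower bound is all Krein's criterion needs. If citing is inconvenient, a fallback is a saddle-point analysis of the Mellin inversion of \eqref{eq:V-mellin} along real $s$, which produces the same exponent $1/\delta$.
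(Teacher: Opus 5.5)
Your proposal is correct and follows the paper's proof essentially step for step. Both arguments use Stirling's estimate $\log m_k=\delta k\log k+O(k)$ with the Stieltjes Carleman criterion for $\delta\le 2$, including the boundary. For $\delta>2$, both use the size-biased inverse-stable density, the small-argument stable asymptotic and the identity $\alpha/(b(1-\alpha))=1/\delta$ in the Stieltjes Krein criterion. The only difference is that you use the full-range Krein integral, so you must also check the behaviour near $v=0$ and positivity on compacts, whereas the paper uses the tail form $\int_c^\infty$ of the criterion and skips those checks.
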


\begin{proof}
The case $p=q$ is immediate, since $V_{p,p}=1$. Assume henceforth that $p<q$, and put
\[
a=\frac1p,\qquad b=\frac1q,\qquad
\delta=a-b=\frac1p-\frac1q>0 .
\]
Let
\[
m_k=\E V_{p,q}^k,\qquad k=1,2,\ldots .
\]
By Lemma~\ref{lem:inverse-stable-size-bias},
\[
m_k
=
2^{\delta k}
\frac{\Gamma(a(k+1))\Gamma(b)}
{\Gamma(b(k+1))\Gamma(a)} .
\]
Stirling's formula gives, as $k\to\infty$,
\[
\log\Gamma(c(k+1))=ck\log k+O(k),
\qquad c>0 .
\]
Consequently,
\[
\log m_k
=
\delta k\log k+O(k),
\]
and hence
\[
m_k^{-1/(2k)}\asymp k^{-\delta/2}.
\]
Here $A_k\asymp B_k$ means that $A_k/B_k$ is bounded above and below by positive constants for all sufficiently large $k$.

By the Stieltjes version of Carleman's criterion, the divergence condition
\[
\sum_{k=1}^{\infty}m_k^{-1/(2k)}=\infty
\]
implies Stieltjes moment determinacy; see, for example, \citep{Stoyanov2000,Lin2017}. Since
\[
\sum_{k=1}^{\infty}m_k^{-1/(2k)}
\asymp
\sum_{k=1}^{\infty}k^{-\delta/2},
\]
the Carleman sum diverges exactly when
\[
\frac{\delta}{2}\le1,
\]
that is, when $\delta\le2$. Therefore $V_{p,q}$ is Stieltjes moment-determinate for $\delta\le2$.

It remains to prove moment indeterminacy for $\delta>2$. Let
\[
\alpha=\frac ba=\frac pq\in(0,1),
\]
and let $s_\alpha$ denote the density of the positive $\alpha$-stable random variable $S_\alpha$ normalized by
\[
\E e^{-uS_\alpha}=e^{-u^\alpha},\qquad u\ge0 .
\]
By Lemma~\ref{lem:inverse-stable-size-bias},
\[
V_{p,q}=2^\delta W,\qquad
W=(W_0)^*,\qquad
W_0=S_\alpha^{-b}.
\]
The density of $W_0$ is obtained by the change of variables $w=s^{-b}$:
\[
f_{W_0}(w)
=
\frac1b\,w^{-1/b-1}s_\alpha(w^{-1/b}),
\qquad w>0.
\]
Since $W$ is the $W_0$-size-biased version of $W_0$,
\[
f_W(w)
=
\frac{w f_{W_0}(w)}{\E W_0}
=
\frac{1}{b\,\E W_0}w^{-1/b}s_\alpha(w^{-1/b}),
\qquad w>0 .
\]
Therefore $V_{p,q}$ has density
\[
f_{p,q}(v)
=
\frac{1}{2^\delta}f_W(2^{-\delta}v)
=
\frac{1}{b\,2^\delta\,\E W_0}
(2^{-\delta}v)^{-1/b}
s_\alpha\!\left((2^{-\delta}v)^{-1/b}\right),
\qquad v>0 .
\]

We now use the classical small-argument asymptotic of the one-sided stable density. In the present normalization,
\[
-\log s_\alpha(x)
=
c_\alpha x^{-\alpha/(1-\alpha)}
+
O\!\left(\log\frac1x\right),
\qquad x\downarrow0,
\]
with $c_\alpha>0$; see Zolotarev \cite[Chapter~2]{Zolotarev1986} and Nolan \cite{Nolan2020}. Substituting
\[
x=(2^{-\delta}v)^{-1/b}
\]
gives
\[
-\log f_{p,q}(v)
=
C_{p,q}v^{\alpha/(b(1-\alpha))}
+
O(\log v),
\qquad v\to\infty,
\]
for some constant $C_{p,q}>0$. Since
\[
\frac{\alpha}{b(1-\alpha)}
=
\frac{b/a}{b(1-b/a)}
=
\frac1{a-b}
=
\frac1\delta,
\]
we obtain
\[
-\log f_{p,q}(v)
=
C_{p,q}v^{1/\delta}+O(\log v),
\qquad v\to\infty .
\]

By the Stieltjes Krein criterion, an absolutely continuous probability law on $(0,\infty)$ with positive density $f$ is Stieltjes moment-indeterminate if, for some $c>0$,
\[
\int_c^\infty
\frac{-\log f(x^2)}{1+x^2}\,\dd x
<\infty ;
\]
see \citep{Stoyanov2000,Lin2017}. Applying this criterion to $f=f_{p,q}$, the preceding logarithmic asymptotic gives
\[
-\log f_{p,q}(x^2)
=
C_{p,q}x^{2/\delta}+O(\log x),
\qquad x\to\infty .
\]
Thus the tail of the Krein integral has the same convergence behavior as
\[
\int^\infty \frac{x^{2/\delta}}{x^2}\,\dd x
=
\int^\infty x^{2/\delta-2}\,\dd x .
\]
This integral is finite if and only if
\[
2/\delta-2<-1,
\]
equivalently $\delta>2$. Hence $V_{p,q}$ is Stieltjes moment-indeterminate for $\delta>2$.

Combining the Carleman half and the Krein half proves the sharp criterion. At the boundary $\delta=2$, the Carleman sum is comparable to $\sum_k k^{-1}$, and therefore diverges; the boundary is determinate.
\end{proof}

\begin{corollary}[Bridge to the Gaussian-base factor of Dytso et al.]
\label{cor:dytso-gaussian-base-moment}
Let $0<r\le2$, and let $V_{G,r}$ denote the Gaussian-base factor of \citet{Dytso2018}, defined by
\[
X_r\stackrel{d}{=}V_{G,r}X_2,
\qquad
V_{G,r}>0,\qquad V_{G,r}\perp X_2 .
\]
Then
\[
V_{G,r}\stackrel{d}{=}V_{r,2},
\]
and
\[
V_{G,r}\ \text{is Stieltjes moment-determinate}
\quad\Longleftrightarrow\quad
r\ge \frac25 .
\]
Equivalently,
\[
V_{G,r}\ \text{is Stieltjes moment-indeterminate}
\quad\Longleftrightarrow\quad
0<r<\frac25 .
\]
Thus \citet[Proposition~8]{Dytso2018} is recovered and sharpened: their sufficient condition $r\ge2/5$ is the exact Gaussian-base threshold.
\end{corollary}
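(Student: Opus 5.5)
The plan is to reduce everything to Theorem~\ref{thm:main} (uniqueness of the factor law) and Theorem~\ref{thm:factor-moment-determinacy} (sharp determinacy threshold), with $p=r$ and $q=2$.

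\textbf{Identification.} Since $0<r\le 2$, Theorem~\ref{thm:main} gives a strictly positive factor $V_{r,2}$, independent of $X_2$, with $X_r\stackrel{d}{=}V_{r,2}X_2$. The Gaussian-base factor $V_{G,r}$ of \citet{Dytso2018} is, by definition, also a strictly positive factor independent of $X_2$ with $X_r\stackrel{d}{=}V_{G,r}X_2$. Lemma~\ref{lem:quotient-uniqueness} says any two such factors share the characteristic function $\Phi_{r,2}$ of their logarithms. Hence $V_{G,r}\stackrel{d}{=}V_{r,2}$. The only care needed is to check that Dytso et al.'s normalization of $X_2$ coincides with ours, namely the $r=2$ case of \eqref{eq:density}, the standard normal. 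This holds because we use their scaling convention.

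\textbf{Threshold.} Stieltjes determinacy depends only on the law, so it suffices to apply Theorem~\ref{thm:factor-moment-determinacy} with $p=r$ and $q=2$. Determinacy holds if and only if
\[
\frac1r-\frac12\le 2,
\]
that is, $1/r\le 5/2$, or $r\ge 2/5$. The case $r=2$ is covered by $V_{2,2}=1$, which is determinate and consistent with $r\ge2/5$. The indeterminate range $0<r<2/5$ is then just the complement within $(0,2]$.

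\textbf{Main obstacle and comparison with Proposition~8.} The substantive step is bookkeeping rather than analysis: confirming that \citet[Proposition~8]{Dytso2018} concerns exactly this factor and gives $r\ge2/5$ as a sufficient condition. Our result adds the indeterminate half for $r<2/5$, which comes from the Krein half of Theorem~\ref{thm:factor-moment-determinacy}. That half applies because $\delta=1/r-1/2>2$ precisely when $r<2/5$.
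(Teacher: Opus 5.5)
Your proposal is correct and follows the same route as the paper's proof. You identify $V_{G,r}$ with $V_{r,2}$ through the uniqueness of the Mellin quotient in Lemma~\ref{lem:quotient-uniqueness}, then apply Theorem~\ref{thm:factor-moment-determinacy} with $p=r$, $q=2$, so that $\delta=1/r-1/2\le 2$ holds exactly when $r\ge 2/5$.
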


\begin{proof}
Taking $p=r$ and $q=2$ in Theorem~\ref{thm:main} gives
\[
X_r\stackrel{d}{=}V_{r,2}X_2 .
\]
The Mellin quotient in Lemma~\ref{lem:quotient-uniqueness} is unique, so the positive factor law agrees with the Gaussian-base factor law of \citet{Dytso2018}. Hence
\[
V_{G,r}\stackrel{d}{=}V_{r,2}.
\]
For $V_{r,2}$,
\[
\delta=\frac1r-\frac12 .
\]
By Theorem~\ref{thm:factor-moment-determinacy}, $V_{r,2}$ is moment-determinate if and only if
\[
\frac1r-\frac12\le2 .
\]
This is equivalent to
\[
\frac1r\le\frac52,
\]
or
\[
r\ge\frac25 .
\]
The indeterminate side is the complementary range $0<r<2/5$.
\end{proof}

The preceding corollary also explains the example emphasized by \citet{Dytso2018}. By their Proposition~2, the generalized Gaussian law $X_r$ is moment-indeterminate for $0<r<1$, while $X_2$ is moment-determinate. The corollary shows that $V_{G,r}$ is moment-determinate exactly for $r\ge2/5$. Hence, for
\[
\frac25\le r<1,
\]
the product representation
\[
X_r\stackrel{d}{=}V_{G,r}X_2
\]
expresses a moment-indeterminate random variable as the product of two independent moment-determinate random variables. Theorem~\ref{thm:factor-moment-determinacy} gives the full two-parameter version of this phenomenon: whenever
\[
p<1,\qquad q\ge1,\qquad \frac1p-\frac1q\le2,
\]
the variables $V_{p,q}$ and $X_q$ are moment-determinate, while
\[
X_p\stackrel{d}{=}V_{p,q}X_q
\]
is moment-indeterminate.

\begin{proposition}[Transport kernels]
\label{prop:transport}
For $0<p\le q$, define, for bounded measurable $h:\R\to\R$,
\[
K_{p,q}h(x)=\E h(V_{p,q}x),
\]
where $V_{p,q}$ has the unique factor law. Then $K_{p,q}$ is a Markov operator and
\[
\E h(X_p)=\E K_{p,q}h(X_q).
\]
Moreover, for $0<p\le q\le r$, the operators compose on test functions according to
\[
K_{q,r}(K_{p,q}h)=K_{p,r}h.
\]
The composition is evaluated using independent copies of the factor laws.
\end{proposition}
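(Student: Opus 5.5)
The plan is to verify the three claims of the proposition in order: $K_{p,q}$ is a Markov operator, it transports $X_q$ to $X_p$, and the kernels compose. Throughout, $V_{p,q}$ denotes the unique factor law from Theorem~\ref{thm:main}, with $V_{p,p}=1$, and every construction uses fresh independent copies of the factors.

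\textbf{Markov property.} Since $h$ is bounded and measurable, the map $(v,x)\mapsto h(vx)$ is jointly measurable on $(0,\infty)\times\R$. By Fubini's theorem, $x\mapsto \E h(V_{p,q}x)=\int_0^\infty h(vx)\,\nu_{p,q}(\dd v)$ is therefore measurable and satisfies $\|K_{p,q}h\|_\infty\le\|h\|_\infty$. The remaining properties are immediate from linearity and monotonicity of expectation together with $\nu_{p,q}$ being a probability measure: $K_{p,q}$ is linear and positivity-preserving, and $K_{p,q}1=1$. Equivalently, $K_{p,q}(x,\cdot)$ is the law of $V_{p,q}x$, which is a probability kernel.

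\textbf{Transport identity.} Take $V_{p,q}$ independent of $X_q$. By Theorem~\ref{thm:main}, $X_p\stackrel{d}{=}V_{p,q}X_q$. Independence and Fubini (the freezing lemma) then give
\[
\E h(X_p)=\E h(V_{p,q}X_q)=\E\bigl[\E[h(V_{p,q}x)]\big|_{x=X_q}\bigr]=\E K_{p,q}h(X_q).
\]

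\textbf{Composition.} Write $g=K_{p,q}h$, which is again bounded and measurable. Let $V_{q,r}$ and $V_{p,q}$ be independent. Then
\[
K_{q,r}g(x)=\E g(V_{q,r}x)=\E\bigl[\E[h(V_{p,q}\,y)]\big|_{y=V_{q,r}x}\bigr]=\E h(V_{p,q}V_{q,r}x),
\]
again by Fubini applied to the independent pair. Corollary~\ref{cor:cocycle} gives $V_{p,q}V_{q,r}\stackrel{d}{=}V_{p,r}$, so the last expectation equals $\E h(V_{p,r}x)=K_{p,r}h(x)$ for every $x$. The degenerate cases $p=q$ or $q=r$ are trivial, since one factor equals $1$.

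The only step requiring care is the repeated use of Fubini and independence to "freeze" one variable inside the expectation. This is routine here because all integrands are bounded and measurable, so there is no real obstacle. The substantive input is entirely Theorem~\ref{thm:main} and Corollary~\ref{cor:cocycle}.
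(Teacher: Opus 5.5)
Your proof is correct and follows the paper's argument essentially step by step. You establish measurability and the Markov properties of $K_{p,q}$, get the transport identity directly from $X_p\stackrel{d}{=}V_{p,q}X_q$ with $V_{p,q}\perp X_q$, and obtain the composition law by freezing $V_{q,r}x$ and invoking Corollary~\ref{cor:cocycle}. Your joint-measurability and Fubini justification is simply a more explicit version of the paper's appeal to monotone-class approximation.
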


\begin{proof}
For bounded measurable $h$, the map $x\mapsto K_{p,q}h(x)$ is bounded and measurable by standard monotone-class approximation. Also $K_{p,q}1=1$, and $h\ge0$ implies $K_{p,q}h\ge0$. Hence $K_{p,q}$ is a Markov operator. The first identity is just $X_p\stackrel{d}{=}V_{p,q}X_q$, with $V_{p,q}\perp X_q$. For the composition law, take independent copies of $V_{p,q}$ and $V_{q,r}$. Corollary~\ref{cor:cocycle} gives $V_{p,q}V_{q,r}\stackrel{d}{=}V_{p,r}$, hence, for every $x\in\R$,
\[
K_{q,r}(K_{p,q}h)(x)=\E h(V_{p,q}V_{q,r}x)=K_{p,r}h(x).
\]
Thus the factors do not merely solve a Mellin-positivity problem; they define a consistent random-scaling calculus across the shape parameter.
\end{proof}

The kernel viewpoint gives an operational use for the theorem. When expectations, probabilities, or loss functionals are easier to evaluate under a tractable $q$-law, the identity above transfers them exactly to the heavier-tailed $p$-law, $p\le q$, by a positive random scale. This supports weighted importance estimation, simulation, and sensitivity analysis across tail shapes without numerical inversion of the Fourier--Mellin density. In particular, one can move from a convenient concentrated or Gaussian-like base law to a heavier-tailed target law by a canonical factor whose distribution is fixed by the pair $(p,q)$.

\begin{corollary}[Weighted stable representation]
\label{cor:weighted}
Let $0<p\le q$, and let $V_{p,q}$ be the unique factor of Theorem~\ref{thm:main}. Then, for every bounded measurable $h:\R\to\R$,
\[
\E h(X_p)=\E h(V_{p,q}X_q).
\]
If $p<q$, put $a=1/p$, $b=1/q$, let $S_{p/q}$ be independent of $X_q$, and set
\[
W_0=S_{p/q}^{-b}.
\]
Then
\[
\E W_0=\frac{\Gamma(a)}{(p/q)\Gamma(b)}=\frac{a\Gamma(a)}{b\Gamma(b)}<\infty,
\]
and
\begin{equation}
\E h(X_p)=\frac{\E\left[W_0 h\left(2^{a-b}W_0X_q\right)\right]}{\E W_0}.
\label{eq:weighted-identity}
\end{equation}
\end{corollary}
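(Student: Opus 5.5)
The first identity, $\E h(X_p)=\E h(V_{p,q}X_q)$, is simply Theorem~\ref{thm:main} read in terms of test functions. For $p\le q$ the theorem gives a factor $V_{p,q}>0$, independent of $X_q$, with $V_{p,q}X_q\stackrel{d}{=}X_p$. Equality in distribution means exactly that $\E h(X_p)=\E h(V_{p,q}X_q)$ for every bounded measurable $h$. Both sides are finite because $h$ is bounded.

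For $p<q$ I would first compute the mean of $W_0$. Lemma~2 with $z=1$ and $\alpha=p/q=b/a$ gives $\E W_0=\E S_\alpha^{-b}=\Gamma(a)/(\alpha\Gamma(b))$; this is the value already recorded in the proof of Lemma~\ref{lem:inverse-stable-size-bias}. Since $1/\alpha=a/b$, this equals $a\Gamma(a)/(b\Gamma(b))$, which is finite and strictly positive.

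Next I would write the factor concretely. By Theorem~\ref{thm:main}(2), $V_{p,q}\stackrel{d}{=}2^{a-b}W$, where $W=W_0^*$ and $W$ is independent of $X_q$. Because the factor law is unique (Lemma~\ref{lem:quotient-uniqueness}), this realization may be used in the first identity. Condition on $X_q$ and set $g(w)=\E h(2^{a-b}wX_q)$ for $w>0$. Since $X_q$ is independent of $W$ and of $W_0$, Fubini gives $\E h(2^{a-b}WX_q)=\E g(W)$. The function $g$ is bounded and measurable by the usual monotone-class or Fubini argument. The size-biasing definition then gives $\E g(W)=\E[W_0 g(W_0)]/\E W_0$. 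Finally, unfolding $g$ with Fubini once more, justified by $\E|W_0 h(\cdot)|\le \|h\|_\infty \E W_0<\infty$, and using independence of $S_{p/q}$ from $X_q$, yields \eqref{eq:weighted-identity}.

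The only delicate point is a technical one. The size-biasing identity in the Definition is stated for functions of $W_0$ alone, while here the integrand depends on $W_0$ and $X_q$ jointly. I would resolve this by applying the identity to the bounded measurable function $g$ obtained after integrating out $X_q$, as above. Alternatively, one could note that $(W_0^*,X_q)$ with independent coordinates has joint law $w\,\Prob(W_0\in\dd w)\,\Prob(X_q\in\dd x)/\E W_0$.
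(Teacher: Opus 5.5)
Your proposal is correct and follows the paper's proof essentially verbatim: realize $V_{p,q}$ as $2^{a-b}W$ with $W=W_0^*$ independent of $X_q$, integrate out $X_q$ into the bounded measurable $g(w)=\E h(2^{a-b}wX_q)$, apply the size-biasing identity to $g$, and unfold using the independence of $W_0$ and $X_q$. One small slip: $\E W_0=\E S_\alpha^{-b}$ comes from Lemma~2 with $z=b$, not $z=1$; the value $z=1$ refers to the formula for $\E W_0^z$ in the proof of Lemma~\ref{lem:inverse-stable-size-bias}.
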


\begin{proof}
The first identity is just $X_p\stackrel{d}{=}V_{p,q}X_q$. For the second, $V_{p,q}=2^{a-b}W$, where $W$ is the $W_0$-size-biased version of $W_0$. Let
\[
g(w)=\E h(2^{a-b}wX_q),\qquad w>0,
\]
where $X_q$ is independent of $W_0$. Then $g$ is bounded and measurable, so the size-biasing identity gives
\[
\E h(V_{p,q}X_q)=\E g(W)=\frac{\E[W_0g(W_0)]}{\E W_0}.
\]
Expanding $g(W_0)$ and using the independence of $W_0$ and $X_q$ gives \eqref{eq:weighted-identity}.
\end{proof}

Formula \eqref{eq:weighted-identity} should be read as an exact weighted importance identity, not as a direct sampler for $V_{p,q}$. Direct unweighted sampling requires sampling the size-biased law of $W_0$; the displayed identity samples $W_0=S_{p/q}^{-1/q}$ and weights by $W_0$. One-sided positive stable variables can be generated by Kanter's formula \citep{Kanter1975}; Chambers--Mallows--Stuck routines require the matching one-sided parameterization \citep{Chambers1976,Nolan2020}. No numerical inversion of the Mellin density is required. The same identity extends to unbounded $h$ whenever the displayed expectations are finite; the bounded formulation above is used only to avoid additional integrability assumptions.

For bounded $h$, the weighted identity gives an unbiased estimator. Let $(W_i,Z_i)_{i\ge1}$ be independent copies of $(W_0,X_q)$. Then the sample average
\[
\widehat I_n(h)=\frac{1}{n\E W_0}\sum_{i=1}^n W_i h\left(2^{a-b}W_iZ_i\right)
\]
is unbiased for $\E h(X_p)$ and converges almost surely to that quantity. Moreover, for bounded $h$ it has finite variance, since
\[
\E W_0^2=\frac{\Gamma(2a)}{(p/q)\Gamma(2b)}<\infty
\]
by Lemma~2. In particular,
\[
\operatorname{Var}\widehat I_n(h)\le \frac{\|h\|_\infty^2}{n(\E W_0)^2}\E W_0^2.
\]
For unbounded $h$, the same statements require the corresponding weighted moments to be finite.

\begin{example}[A non-Gaussian branch beyond the Gaussian-mixture case]
Let $p=3$ and $q=4$. Then $a=1/3$, $b=1/4$, and $\alpha=3/4$. The theorem gives
\[
X_3\stackrel{d}{=}V_{3,4}X_4,
\qquad
V_{3,4}=2^{1/12}\left(S_{3/4}^{-1/4}\right)^*.
\]
This example lies entirely in the regime $p,q>2$, outside the Gaussian-mixture range highlighted by the classical $q=2$ base case. It is also outside the $0<p\le q\le2$ product subregion obtained from \citet[Proposition~5]{Dytso2018} under the reparameterization in the introduction.
\end{example}

\begin{example}[The Laplace law as a Gaussian scale mixture]
Let $p=1$ and $q=2$. Then $a=1$, $b=1/2$, and $\alpha=1/2$. The positive $1/2$-stable variable has density
\[
f_{S_{1/2}}(s)=\frac{1}{2\sqrt{\pi}}s^{-3/2}\exp\left(-\frac{1}{4s}\right),\qquad s>0.
\]
Thus $W_0=S_{1/2}^{-1/2}$ has density
\[
f_{W_0}(w)=\frac{1}{\sqrt{\pi}}e^{-w^2/4},\qquad w>0.
\]
Since $\E W_0=2/\sqrt{\pi}$, its size-biased version $W$ has density
\[
f_W(w)=\frac{w}{2}e^{-w^2/4},\qquad w>0.
\]
The factor $V=\sqrt{2}W$ therefore has density
\[
f_V(v)=\frac{v}{4}e^{-v^2/8},\qquad v>0.
\]
Consequently,
\[
X_1\stackrel{d}{=}VX_2.
\]
Equivalently, $V^2$ is exponential with rate $1/8$, so the conditional variance of $VX_2$ is exponentially mixed. The resulting density is $(1/4)e^{-|x|/2}$, exactly the $r=1$ member of \eqref{eq:density}. This is the classical Rayleigh-scale representation of the Laplace law, written in the present normalization.
\end{example}

\section{Conclusion}
\label{sec:conclusion}

We have proved an exact positive scale-mixture classification for the centered unit-scale generalized Gaussian family:
\[
X_p\stackrel{d}{=}VX_q,
\qquad V>0,\quad V\perp X_q,
\]
holds if and only if $p\le q$, and the law of the factor is unique. The nontrivial branch $p<q$ is constructive. The mixing factor is an explicit size-biased power of a positive stable random variable:
\[
V=2^{1/p-1/q}W,
\qquad
W=\left(S_{p/q}^{-1/q}\right)^*,
\]
where the star denotes ordinary size-biasing of the entire positive random variable $S_{p/q}^{-1/q}$. The proof identifies the Mellin quotient of the two absolute-value laws with the Mellin transform of this stable-size-biased factor, thereby proving positivity of the inverse Fourier--Mellin candidate throughout the $p<q$ branch, including the cases not already supplied by the earlier product subregion. The reverse direction $p>q$ is blocked because the necessary Mellin quotient for $\log V$ is unbounded. The uniqueness of the factor yields the distributional cocycle $V_{p,r}\stackrel{d}{=}V_{p,q}V_{q,r}$, so the result is an order theorem rather than a single isolated product identity.

Several extensions suggest themselves. First, one may ask for finer properties of the factor $V$, such as tail asymptotics, unimodality, infinite divisibility, or generalized gamma convolution membership \citep{Bondesson1992,Schilling2012}. Second, the result suggests multivariate analogues for radial or $p$-spherical generalized Gaussian distributions. Third, the factorization can be used as a transfer principle: expectations, weighted sampling schemes, and inequalities stable under the kernel $K_{p,q}$ can be transported from order $q$ to every lower order $p\le q$ by random scaling.

A further, more speculative direction is to connect the present exact scale factors with hierarchical nonequilibrium models in which observed distributions arise from fluctuating intensive or scale variables. Hierarchical models for complex systems and turbulence use stochastic cascade dynamics to generate broad stationary laws \citep{Macedo2017,SalazarVasconcelos2010}, while superstatistics models average local equilibrium distributions over fluctuating parameters \citep{Beck2005}. The result here should not be read as a physical derivation of those models. It suggests a narrower mathematical question: whether some hierarchical or superstatistical mechanisms produce, or are approximated by, the positive-stable size-biased scale laws that exactly transport one generalized Gaussian shape into another.

\section*{Acknowledgements}

The author acknowledges Dytso, Bustin, Poor and Shamai for posing the product-factorization question and for isolating the Mellin quotient that motivates the present construction.

\end{document}